\documentclass[sn-mathphys]{sn-jnl}
\usepackage{amsthm}
\usepackage{amsfonts}
\usepackage{amssymb}
\usepackage{enumerate}
\usepackage{thmtools}
\usepackage{cleveref}
\catcode`\|=12
\usepackage{tikz}
\usetikzlibrary{babel}
\newtheorem{theorem}{Theorem}
\newtheorem{lemma}{Lemma}
\newtheoremstyle{sltheorem}
{2pt}                
{2pt}                
{\slshape}        
{}                
{\slshape}       
{:}               
{ }               
{ }                
\theoremstyle{sltheorem}
\newtheorem{claim}{Claim}
\newcommand{\newcategory}[1]{\expandafter\newcommand\csname #1\endcsname{\mathbf{#1}}}

\begin{document}
\title{Correction: N-free posets and orthomodularity}
\author{Gejza Jenča}\email{gejza.jenca@stuba.sk}
\affil{\orgdiv{Department of Mathematics and Descriptive Geometry, Faculty of Civil Engineering}, \orgname{Slovak University of Technology},
\orgaddress{\street{Radlinského 11},
\city{Bratislava},
\postcode{810 05},
\country{Slovakia}}}
\abstract{
We present a corrected version of a theorem from the paper \emph{N-free posets and
orthomodularity} published in  Order 43(1) (2026).
}
\keywords{orthoset, Dacey space, N-free poset, X-free poset}
\pacs[MSC 2020]{06A06,06C15}

\maketitle

\section{Introduction}

Regrettably, in \cite[Theorem 4.3]{jenca2026nfree} it has been claimed by the author of this note that a finite poset $P$ has a
compatible incomparability orthoset if and only if $P$ does not contain a certain
type of configuration of elements, which we called ,,weak N''.  This is not true. 
The corrected version, presented here, states that a finite poset $P$ has a compatible
incomparability orthoset iff there is no weak N and no X in $P$. 

\section{The counterexample}
Let $P$ be a poset. 
We say that $(a,b,c,d)\in P^4$ form \emph{a weak N}, if
$a<c$, $b\prec c$, $b<d$, $a\perp b$,  $c\perp d$. We denote this by
$N^?(a,b,c,d)$.

Let us write $a\perp b$ to express the fact that two elements of a poset
are incomparable and $x^\perp$ for the set of all elements incomparable to $x$.
We say that a finite poset $P$ \emph{has a compatible incomparability orthoset}
if for all $x,y\in P$ such that $x\not\perp y$, there exists $z\in P$ such that
$x^\perp\cup y^\perp\subseteq z^\perp$.

\begin{figure}
\begin{center}
\begin{tikzpicture}[x=3em,y=3em]

\node[circle, fill=black, draw=black, inner sep=0pt, minimum size=5pt, label={[label distance=2.5pt]-90:{\small $a$}}] (a) at (0,0) {};
\node[circle, fill=black, draw=black, inner sep=0pt, minimum size=5pt, label={[label distance=2.5pt]-90:{\small $b$}}] (b) at (1,0) {};
\node[circle, fill=black, draw=black, inner sep=0pt, minimum size=5pt, label={[label distance=2.5pt]180:{\small $m$}}] (m) at (0.5,0.5) {};
\node[circle, fill=black, draw=black, inner sep=0pt, minimum size=5pt, label={[label distance=2.5pt]90:{\small $c$}}] (c) at (0,1) {};
\node[circle, fill=black, draw=black, inner sep=0pt, minimum size=5pt, label={[label distance=2.5pt]90:{\small $d$}}] (d) at (1,1) {};

\draw[-, color=black, line width=0.7pt, solid] (a) -- (m);
\draw[-, color=black, line width=0.7pt, solid] (m) -- (c);
\draw[-, color=black, line width=0.7pt, solid] (b) -- (m);
\draw[-, color=black, line width=0.7pt, solid] (m) -- (d);

\end{tikzpicture}
\end{center}
\caption{The X}
\label{fig:X}
\end{figure}
In \cite[Theorem 4.3]{jenca2026nfree} the author of this note stated that a finite poset $P$ has a
compatible incomparability orthoset if and only if $P$ does not contain a weak N.
This is not true, the counterexample is the poset in Figure \ref{fig:X}. 
Indeed, there is no weak N in that poset. 
Observe that $a\not\perp c$, $a^\perp\cup c^\perp=\{b,d\}$, but there
is no element that would be incomparable to both $b$ and $d$. It turns out that
besides weak N, this type of configuration of elements is another one we must forbid.

\section{The correction}

We say that $(a,b,c,d,m)\in P^5$ form \emph{an X}, if
$a<m<c$, $b<m<d$, $a\perp b$ and $c\perp d$. We denote this
by $X(a,b,c,d;m)$. A \emph{covering X} is an $X(a,b,c,d;m)$ with
$m\prec c$. We denote this by $X^\prec(a,b,c,d;m)$.
\begin{lemma}\label{lemma:XimpliesCoveringX}
A finite poset contains an X if and only if it contains a covering X.
\end{lemma}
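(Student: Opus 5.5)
The direction from a covering X to an X is immediate, since a covering X is in particular an X. For the converse I plan an extremal argument. Among all X's in the finite poset $P$, choose $X(a,b,c,d;m)$ for which the interval $[m,c]=\{x\in P: m\le x\le c\}$ has the smallest possible number of elements. I will show that this X already satisfies $m\prec c$, i.e.\ it is a covering X.

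Suppose, for contradiction, that $m\not\prec c$. Since $m<c$ and $P$ is finite, there is an element $m'$ with $m<m'\prec c$; for instance, take a maximal element of the nonempty finite set $\{x: m<x<c\}$. Then $a<m'$ and $b<m'$ by transitivity, and $m'<c$. Next I compare $m'$ with $d$. We have $m'\neq d$, because $m'<c$ while $d\perp c$. Also $m'>d$ is impossible, since it would give $d<c$.

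This leaves two cases.
\begin{enumerate}[(i)]
\item If $m'<d$, then $X(a,b,c,d;m')$ holds: $a<m'<c$, $b<m'<d$, $a\perp b$ and $c\perp d$. Moreover $m'\prec c$, so this is a covering X. Its interval $[m',c]$ is also strictly smaller than $[m,c]$, since it omits $m$. This contradicts the minimal choice.
\item If $m'\perp d$, then I will check that $X(a,b,m',d;m)$ holds. We have $a<m<m'$ and $b<m<d$ from the original X, $a\perp b$, and $m'\perp d$. Its interval $[m,m']$ is a subset of $[m,c]$ that misses $c$, so it is strictly smaller. Again this contradicts minimality.
\end{enumerate}
Hence $m\prec c$, which proves the lemma.

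The only delicate point is choosing the right quantity to minimise. Replacing $c$ by a cover of $m$ below $c$ does not directly work, because that cover may become comparable to $d$. Minimising $|[m,c]|$ resolves this: moving $m$ upward and moving $c$ downward both shrink the interval, and those are exactly the two moves that the case split produces.
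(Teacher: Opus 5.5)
Your proof is correct, but it takes a different route from the paper's.

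The paper argues directly from an arbitrary $X(a,b,c,d;m)$ in two passes. First it replaces $m$ by a maximal element $m'$ of $\{x: m\le x,\ x<c,\ x<d\}$. Then it replaces $c$ by a minimal element $c'$ of $\{x: m'<x\le c\}$. The maximality of $m'$ forces every element of that second set to be incomparable to $d$, so $c'\perp d$ and $m'\prec c'$ follow at once.

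You instead take an X minimising $|[m,c]|$ and examine a single lower cover $m'$ of $c$ above $m$. You split on whether $m'<d$ or $m'\perp d$, after correctly excluding $m'=d$ and $d<m'$. Your two cases are exactly the paper's two moves (push $m$ up, push $c$ down). You perform them one element at a time and let the shrinking interval guarantee termination. The paper front-loads all the upward moves of $m$, so that the downward move of $c$ can never create comparability with $d$. That is precisely the obstacle you flag in your closing remark.

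What each approach buys: the paper's is explicit and constructive. It keeps $a,b,d$ and outputs $m\le m'\prec c'\le c$ from the given X. Yours needs only local information about one cover at a time, and your case (i) already yields a covering X outright, so minimality is really used only in case (ii). Since both of your moves keep $a,b,d$ fixed, restricting the minimisation to X's with the given $a,b,d$ would recover the paper's stronger output.
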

\begin{proof}
Consider the set
\[
M=\{x\in P:m\leq x\text{ and }x<c\text{ and }x< d\}
\]
If $m'$ is a maximal element of $M$, then $X(a,b,c,d;m')$.
Consider now the set
\[
A=\{x\in P:m'<x\leq c\}
\]
As $m'$ is a maximal common lower bound of $c,d$, we have
$A\perp d$. If $c'$ is a minimal element of $A$, then
$m'\prec c'$ and we see that $X^\prec(a,b,c',d;m')$.
\end{proof}

\begin{theorem}
For every finite poset $P$, the following are equivalent.
\begin{enumerate}[(a)]
\item There is no weak N and no X in $P$.
\item $P$ has a compatible incomparability orthoset.
\end{enumerate}
\end{theorem}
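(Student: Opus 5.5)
We prove (b)$\Rightarrow$(a) by contraposition. For (a)$\Rightarrow$(b) we reduce to a statement about one comparable pair, which is where X-freeness is needed.

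\emph{(b)$\Rightarrow$(a).} Assume (b). Note that if $z$ witnesses compatibility for $x,y$, then $z\notin x^\perp\cup y^\perp$, since $z\not\perp z$. Hence $z$ is comparable to both $x$ and $y$.

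First suppose $N^?(a,b,c,d)$. Apply (b) to $x=b$, $y=c$, which are comparable. We have $a\in b^\perp$ and $d\in c^\perp$, so the witness $z$ must satisfy $z\perp a$ and $z\perp d$, and $z$ is comparable to $b$ and $c$. Since $b\prec c$, the only options are $z\le b$ or $z\ge c$. If $z\le b$ then $z<d$, contradicting $z\perp d$. If $z\ge c$ then $z>a$, contradicting $z\perp a$. So there is no weak N.

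Now suppose $P$ contains an X. By Lemma \ref{lemma:XimpliesCoveringX} we may take $X^\prec(a,b,c,d;m)$. Apply (b) to $x=a$, $y=c$. The witness $z$ is comparable to $a$ and $c$, and satisfies $z\perp b$ (as $b\in a^\perp$) and $z\perp d$ (as $d\in c^\perp$).
\begin{itemize}
\item $z\ge c$ is impossible, since then $z>b$.
\item $z\le a$ is impossible, since then $z<d$.
\item So $a<z<c$. Then $z\le m$ would give $z<d$, and $m<z<c$ contradicts $m\prec c$. Hence $z\perp m$.
\end{itemize}
In the remaining case $(z,m,c,d)$ is a weak N: $z<c$, $m\prec c$, $m<d$, $z\perp m$, $c\perp d$. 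This contradicts the first paragraph.

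\emph{(a)$\Rightarrow$(b).} Take $x\not\perp y$. The case $x=y$ is trivial, so assume $x<y$. Put
\[
D=\{u\in P: u<y,\ u\perp x\},\qquad U=\{w\in P: x<w,\ w\perp y\}.
\]
Every $u\in x^\perp$ lies in $y^\perp$ or in $D$, since $u>y$ would force $u>x$. Dually, every $w\in y^\perp$ lies in $x^\perp$ or in $U$.
\begin{itemize}
\item If $D=\emptyset$, then $x^\perp\subseteq y^\perp$ and $z=y$ works.
\item If $U=\emptyset$, then $y^\perp\subseteq x^\perp$ and $z=x$ works.
\end{itemize}
So it suffices to prove the key claim: if $P$ has no weak N and no X, then $D$ and $U$ cannot both be nonempty.

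For $x\prec y$ the key claim is immediate. Elements $u\in D$ and $w\in U$ would give $N^?(u,x,y,w)$, since $u<y$, $x\prec y$, $x<w$, $u\perp x$, $y\perp w$.

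For general $x<y$ I would take a counterexample $(x,y,u,w)$ with $u\in D$, $w\in U$, and minimise the length of a maximal chain $x=t_0\prec t_1\prec\dots\prec t_k=y$. Then I would follow how $u$ and $w$ relate to the $t_i$.
\begin{itemize}
\item $u$ is incomparable to $t_0$ and below $t_k$. Take the least $i$ with $u<t_i$; then $u\perp t_{i-1}$.
\item Dually, take the greatest $j$ with $t_j<w$; then $t_{j+1}\perp w$.
\item If some intermediate pair $(t_{i'},t_{j'})$ again has both sets nonempty, minimality is contradicted. Otherwise a single cover $t_{l}\prec t_{l+1}$ with a suitable $u'\perp t_l$, $u'<t_{l+1}$ and $w'>t_l$, $w'\perp t_{l+1}$ yields a weak N as in the covering case.
\end{itemize}

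The configuration this does not immediately reduce is where $i>j+1$. Then $u$ and $w$ leave the chain at different places, and all of $x,u$ lie below both of $y,w$ with $y\perp w$. Here I would choose a maximal element $m$ of the common upper bounds of $x$ and $u$ lying strictly below both $y$ and $w$. This gives $X(x,u,y,w;m)$, mirroring the proof of Lemma \ref{lemma:XimpliesCoveringX}. If no such $m$ exists, I would instead extract a cover exhibiting a weak N.

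I expect this last case analysis, showing that $D\neq\emptyset\neq U$ always produces either a weak N or an X, to be the main obstacle. In particular, the subcase $u\perp w$ may need a further choice of extremal elements before either forbidden configuration appears.
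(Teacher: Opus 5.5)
Your proof of (b)$\Rightarrow$(a) is correct and is essentially the paper's argument. The gap is in (a)$\Rightarrow$(b). You reduce to the key claim that, in a poset with no weak N and no X, the sets $D$ and $U$ cannot both be nonempty. That claim is false, so no case analysis can finish the proof along these lines.

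Take the five-element poset with $x<z<y$, $u<y$, $x<w$ and no further strict comparabilities, so that $u\perp x$, $u\perp z$, $u\perp w$, $w\perp z$ and $w\perp y$. Its covers are $x\prec z$, $z\prec y$, $u\prec y$ and $x\prec w$. For none of them is there both an element below the upper one incomparable to the lower one and an element above the lower one incomparable to the upper one, so there is no weak N. The only element with two incomparable elements below it is $y$, and nothing lies above $y$, so there is no X.

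Nevertheless, for the pair $x<y$ you get $D=\{u\}$ and $U=\{w\}$. The pair is compatible only through the intermediate element $z$: we have $z^\perp=\{u,w\}=x^\perp\cup y^\perp$, while neither $x$ nor $y$ works. In terms of your sketch this is exactly the case $i=2>j+1=1$ with $u\perp w$. Your assertion that $x,u$ both lie below $y$ and $w$ fails, since $u\not< w$. There is no common upper bound of $x,u$ strictly below $y$ and $w$, and there is no weak N to fall back on.

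The missing idea is that you must use the failure of every candidate witness $z$ with $x<z<y$, not only of $z=x$ and $z=y$. The paper argues by contraposition. Assume no witness exists for $x<y$, and take a maximal chain $x=z_0\prec z_1\prec\cdots\prec z_{n+1}=y$. Then every $z_i$ has some $w_i$ with either $x\perp w_i<z_i$ or $z_i<w_i\perp y$; one may take $w_0\in U$ and $w_{n+1}\in D$. Let $j$ be the largest index of the second kind. If $z_{j+1}\perp w_j$ and $w_{j+1}\perp z_j$, one obtains $N^?(w_{j+1},z_j,z_{j+1},w_j)$. Otherwise one obtains $X(w_{j+1},x,y,w_j;z_{j+1})$ or $X(x,w_{j+1},w_j,y;z_j)$. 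Your minimal-counterexample induction on chain length cannot substitute for this, because the statement it tries to establish has counterexamples such as the one above.
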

\begin{proof}

(b)$\implies$(a): 
Let us prove that if $P$ contains a weak $N$, then $(P,\perp)$ is not compatible.
Suppose that we have $N^?(a,b,c,d)$ in $P$. As $b\prec c$, $c\not\perp b$. We will prove that there is
no $z\in P$ such that $c^\perp\cup b^\perp\subseteq z^\perp$. 

Assume the contrary; this implies that $z\not\perp b$ and $z\not\perp c$.
Indeed, $z\perp b$ would mean that $z\in b^\perp\subseteq z^\perp$ and hence $z\perp z$ 
which is not true, so $z\not\perp b$. Similarly $z\not\perp c$. Furthermore, as $a\in b^\perp$
and $d\in c^\perp$, we see that $\{a,d\}\subseteq b^\perp\cup c^\perp\subseteq z^\perp$, so
$z\perp a$ and $z\perp d$.

We have proved that $z\not\perp b$. This is equivalent to $z\leq b$ or $b<z$. However,
$z\leq b<d$ contradicts $z\perp d$, so $b<z$. Similarly, we can prove that $z<c$. But $b<z<c$
contradicts with our assumption that $b\prec c$.

Suppose now that $P$ contains an X. By \Cref{lemma:XimpliesCoveringX}, 
$P$ contains a covering X, say $X^\prec(a,b,c,d;m)$. Assume that $(P,\perp)$ is 
compatible. As $a\not\perp c$, there is $z\in P$ such that 
$a^\perp\cup c^\perp\subseteq z^\perp$. If $a\perp z$, then $z\in a^\perp$, but then
$z\perp z$ which is not true, hence we see that $a\not\perp z$. Similarly,
$c\not\perp z$. Now, $c\not\perp z$ implies either $c\leq z$ or $z<c$. But if $c\leq
z$, then $b<z$, which contradicts $b\in a^\perp\subseteq z^\perp$. Hence
$z<c$. Similarly, $a<z$. If $z\not\perp m$, then $z<m<d$, which contradicts 
$d\in c^\perp\subseteq z^\perp$ or $b<m\leq z$, which contradicts $b\in
a^\perp\subseteq z^\perp$. Therefore, $z\perp m$.

Summarizing, we have $z<c\succ m<d$, $c\perp d$ and $z\perp m$, hence
$N^?(z,m,c,d)$ and hence by the previous part of the proof, 
$P$ is not compatible -- a contradiction.

\noindent(a)$\implies$(b): Suppose that $(P,\perp)$ is not compatible. We will
prove that there is some weak N in $P$, or some X in $P$.

Since $(P,\perp)$ is not compatible, there are $x,y\in P$ with $x\not\perp y$ 
such that there does not exist $z\in P$ such that $x^\perp\cup y^\perp\subseteq z^\perp$.
Equivalently, there are $x,y\in P$ with $x\not\perp y$, such that for all $z\in P$, $x^\perp\cup y^\perp\not\subseteq z^\perp$.

Clearly, $x\neq y$, because otherwise we could put $z=x$. Hence $x\not\perp y$
implies that either $x<y$ or $x>y$. Without loss of generality, we may assume
that $x<y$.  If $x^\perp\subseteq y^\perp$, then $x^\perp\cup y^\perp=y^\perp$
and we may put $z=y$, contradicting our assumptions. So $x^\perp\not\subseteq
y^\perp$ and, similarly, $y^\perp\not\subseteq x^\perp$. Hence there must be
$a\in x^\perp\setminus y^\perp$ and $d\in y^\perp\setminus x^\perp$.
Summarizing, we see that $a\perp x$, $a\not\perp y$, $d\perp y$ and $d\not\perp x$.

As $a\not\perp y$, either $a<y$ or $y\leq a$. If $y\leq a$, then $x<y\leq a$, but
this contradicts $a\perp x$. So $a<y$ and, similarly, we can prove that $x<d$. If
$x\prec y$, then $N^?(a,x,y,d)$ and we are done. If $x\not\prec y$, then there is
$z\in P$ be such that $x<z<y$. By assumption, $(P,\perp)$ is not compatible, hence it is not true that $x^\perp\cup
y^\perp\subseteq z^\perp$. That means, there is $w\in x^\perp\cup y^\perp$ such that
$w\not\perp z$. Since $z\not\in x^\perp\cup y^\perp$, $w\neq z$. Hence, for every
$z\in P$ with $x<z<y$ there is $w\in x^\perp\cup y^\perp$ such that either $w<z$ or
$z<w$. However, if $w<z$, then $w<z<y$ implies $w\notin y^\perp$, so we must have
$w\in x^\perp$. Similarly $z<w$ implies $w\in y^\perp$. Summarizing, we see that
we have proved the following claim.
\begin{claim}
For every $z\in P$ with $x<z<y$ there is $w\in P$ such that exactly
one of the following is true:
\begin{itemize}
\item $w<z$ and $w\in x^\perp$
\item $z<w$ and $w\in y^\perp$
\end{itemize}
\end{claim}
With this property of every such $z$ in mind, let us consider a maximal chain 
$$
x=z_0\prec z_1\prec\dots\prec z_n\prec z_{n+1}=y
$$
in the closed interval $[x,y]$ of $P$. There is a sequence 
\[
d=w_0,w_1,\dots,w_n,w_{n+1}=a
\]
such that for each $i$ either $x\perp w_i<z_i$ or $z_i<w_i\perp y$. Let $j$ be
the maximal index such that $z_j<w_j\perp y$.
As $y=z_{n+1}\not <w_{n+1}=a$, we see that $j<n+1$. 
Moreover, by the maximality of $j$, we may conclude that 
$x\perp w_{j+1}<z_{j+1}$. Let us write down what we know about all of these
elements so far.
\begin{itemize}
\item $w_{j+1}<z_{j+1}$
\item $z_j\prec z_{j+1}$
\item $z_j<w_j$.
\end{itemize}
If, in addition to these facts, $z_{j+1}\perp w_j$ and $w_{j+1}\perp z_j$, then 
$N^?(w_{j+1},z_j,z_{j+1},w_j)$ and we have found a weak N in $P$.

We will now prove that if either $z_{j+1}\not\perp w_j$ or
$w_{j+1}\not\perp z_j$, then there is an X in $P$.

Suppose that $z_{j+1}\not\perp w_j$, so that
either $w_j<z_{j+1}$ or $z_{j+1}=w_j$ or $z_{j+1}<w_j$.
If $w_j<z_{j+1}$, then $z_j<w_j<z_{j+1}$, contradicting $z_j\prec z_{j+1}$.
If $z_{j+1}=w_j$, then $z_{j+1}\perp y$, contradicting $z_{j+1}\leq y$. Therefore,
$z_{j+1}<w_j$. If $j=n$, then $z_{j+1}=y<w_j$ contradicts $w_j\perp y$. Hence
$j<n$ and thus $z_{j+1}<z_{n+1}=y$. Summarizing, under the assumption
$z_{j+1}\not\perp w_j$ we obtain that
\begin{itemize}
\item $w_{j+1}\perp x$ by the maximality of $j$;
\item $w_{j+1}<z_{j+1}$ by the required property of $w_{j+1}$;
\item $x<z_{j+1}$, because $x=z_0$;
\item $z_{j+1}<w_j$, by previous paragraph;
\item $z_{j+1}<y$, by previous paragraph;
\item $w_j\perp y$ by the required property of $w_j$.
\end{itemize}
But this exactly means that $X(w_{j+1},x,y,w_j;z_{j+1})$ and we have found an X in
$P$.

Similarly, the assumption $w_{j+1}\not\perp z_j$ implies that there is an X in $P$,
as this situation is essentially dual to the $z_{j+1}\not\perp w_j$ case. 
Specifically: $w_{j+1}\not\perp z_j$ forces $w_{j+1} < z_j$, since $z_j < w_{j+1}$ would violate $z_j\prec z_{j+1}$ (as $w_{j+1} < z_{j+1}$), and $w_{j+1} = z_j$ contradicts $w_{j+1}\perp x$ (since $z_j > x$).
Note also that $j\geq 1$ in this case: if $j = 0$, then $z_j = x$ and $w_{j+1}\not\perp x$ would contradict $w_{j+1}\perp x$.

The resulting configuration is $X(x, w_{j+1}, w_j, y;\, z_j)$, whose conditions are:
$x < z_j < w_j$,
$w_{j+1} < z_j < y$
(using $w_{j+1} < z_j$ from above and $z_j < z_{j+1} \leq y$),
$x\perp w_{j+1}$, and $w_j\perp y$ -- all verified.

\end{proof}
\section{Supplementary material}

All the results from this note and all valid results from
\cite{jenca2026nfree} have been formalized in Lean 4. The source code is available
in the repository at the URL
\url{https://github.com/gjenca/n-free-posets-and-orthomodularity.git}.

\section*{Declarations}
\begin{description}
\item[\bf Funding:]
This research is supported by grants VEGA 2/0128/24 and 1/0036/23,
Slovakia.
\item[\bf Conflict of interest/Competing interest:]None.
\item[\bf Availability of data and material:]Not applicable.
\item[\bf Code availability:]Not applicable.
\item[\bf Authors' contributions:]There is a single author. 
\end{description}


\
\end{document}